\documentclass[conference]{IEEEtran}

\usepackage{cite}
\usepackage{amsmath,amssymb,amsfonts}
\usepackage{algorithmic}
\usepackage{graphicx}
\usepackage{textcomp}
\usepackage{xcolor} 
\usepackage{epsfig}
\usepackage[font=small,skip=0pt]{caption}
\usepackage{amsmath,amssymb,amsthm}
\usepackage{comment} 
\usepackage{tabularx}
\usepackage{graphicx}
\usepackage{enumitem}
\usepackage{eufrak}
\usepackage{lipsum}
\usepackage{stfloats}
\usepackage{bbm}
\usepackage{relsize}
\usepackage{multicol}
\usepackage{enumitem}

\usepackage[ruled]{algorithm2e}
\usepackage[bindingoffset=0.2in, left=.55 in, right=.75 in,  top=.75in, bottom=.8in, footskip=.25in]{geometry}

\newtheorem{thm}{Theorem}

\newtheorem{propos}{Proposition}

\DeclareMathOperator{\erfc}{erfc}

\def\bsum{\mathlarger{\sum}}

\setlength{\belowcaptionskip}{-10 pt}

\def\BibTeX{{\rm B\kern-.05em{\sc i\kern-.025em b}\kern-.08em
		T\kern-.1667em\lower.7ex\hbox{E}\kern-.125emX}}
\begin{document}
	
	\title{	\vspace{0.25 in}
		A New Class of Path-Following Method for Time-Varying Optimization with Optimal Parametric Functions
	}
	\author{	\IEEEauthorblockN{Mohsen Amidzadeh, \textit{Member, IEEE}\\
		 Department of Computer Science, Aalto University, Finland\\
		 mohsen.amidzade@aalto.fi}
	\thanks{M. Amidzadeh is with the Department of Communication and Networking, Aalto University, Finland
	(mohsen.amidzade@aalto.fi)	}
	}
	\maketitle

	\begin{abstract}
		In this paper, we consider a formulation of nonlinear constrained optimization problems. 
		We reformulate it as a time-varying optimization using continuous-time parametric functions 
		and derive a dynamical system for tracking the optimal solution. 
		We then re-parameterize the dynamical system to express it based on a linear combination of the parametric functions. 
		Calculus of variations is applied to optimize the parametric functions, 		
		so that the optimality distance of the solution is minimized. 	
		Accordingly, an iterative dynamic algorithm, named as OP-TVO, 
		is devised to find the solution with an efficient convergence rate. 
		We benchmark the performance of the proposed algorithm with the prediction-correction method (PCM)
		from the optimality and computational complexity point-of-views.
		The results show that  OP-TVO can compete with PCM
		for the optimization problem of interest,
		which indicates it can be a promising approach to replace PCM for some time-varying optimization problems.
		Furthermore, this work provides a novel paradigm for solving parametric dynamical system.
	\end{abstract}

	\begin{IEEEkeywords}
		Time-varying optimization problem, functional optimization problem, prediction-correction method, optimality distance, dynamical system.
	\end{IEEEkeywords}
	
	
	\section{Introduction}\label{SecIntro}
	Time-Varying Optimization (TVO) problems pertain to parametric optimization problems
	when the objective, constraints, or both are parameterized using continuously-varying functions.
	This paradigm  is exploited to find the optimal trajectory of solutions in continuous-time optimization challenges.
	In addition, for optimization problems for which the optimal solution is known for one specific configuration, 
	TVO can be used to extrapolate the solution to  the settings  of interest.
	
	TVO is studied in the context of parametric programming \cite{Guddat1990, Dinh2012, Kungurtsev2014, Kungurtsev2017, Liao2018}, 
	where the optimization problem is parametrized using continuous parameters.
	In \cite{Kungurtsev2014}, the authors develop prediction-correction methods to solve nonlinear constrained TVO problems.
	Their approach tracks a trajectory path with some convergence guarantees.
	A path-following procedure has been presented in \cite{Kungurtsev2017} 
	to trace a solution path of a parametric nonlinear problem.
	The authors utilize quadratic programming as a tracking procedure 
	and derive some convergence properties for their approach.
	In \cite{Liao2018}, the authors leverage a path-following method 
	to track the solution of parametric nonlinear constrained programs
	using a semi-smooth barrier function.
	
	TVO can be also considered as an extension of time-invariant optimization problems 
	\cite{Simonetto2016, Simonetto2017, Fazlyab2018, Simonetto2019, Simonetto2020}.
	In \cite{Simonetto2016}, a discrete-time prediction-correction approach has been proposed to minimize 
	unconstrained time-varying functions.
	They analyze the asymptotic tracking error to ensure convergence of the solution.
	In \cite{Simonetto2017}, the authors present prediction-correction methods to track the optimal solution trajectory, 
	in the primal space, with a bounded asymptotical error.
	Then, in \cite{Fazlyab2018}, an interior-point method is developed for optimization problems 
	with time-varying cost and constraint functions. 
	The authors formulate a continuous-time dynamical system to track the optimal solution 
	with asymptotical tracking error being bounded.
	In \cite{Simonetto2019}, prediction-correction methods have been devised, in the dual space, to track the solutions 
	of time-varying linearly constrained problems. 
	
	When it comes to the approach for solving TVO problems, the prediction-correction schemes 
	as promising tracking algorithms, should be addressed \cite{Simonetto2016, Simonetto2017,Guo2018, Simonetto2019, Paternain2019, Bastianello2019, Bernstein2019}.
	It constitutes a dynamic-tracking procedure or predictor step to trace the solution trajectory over time,
	and a Newton-based iterative method or corrector step to adjust the error of predictions.
	Substantial efforts have been made to analyze the convergence of
	prediction-correction approaches and ensure the boundedness of tracking error \cite{Simonetto2017,Fazlyab2018,Guo2018}.
	
	In this paper, we study a class of nonlinear constrained optimization problems whose optimal solution is known 
	for a setting, and the optimal solution is aimed for a target configuration. 
	We then exploit the notion of TVO to reformulate the problem based on parametric programming. 
	We then explore a set of parametric functions such that the optimality distance is optimized. 
	In contrast to the works based on the prediction-correction methods, 
	we focus on a functional optimization problem to expedite the convergence rate. 
	In other words, this paper differs from prediction-correction approaches, 
	in the sense that the parametric functions are designed to minimize the optimality distance of the solution 
	instead of correcting the predictions.
	
	The main contributions of this paper are listed as follows:	
	\begin{itemize}[leftmargin=*]
		\item We re-express a class of nonlinear constrained optimization problems as 
		a time-varying optimization problem with continuous-time parametric functions. 
		We then use a re-parametrization trick to represent the corresponding dynamical system
		based on a linear combination of the parametric functions.
		
		\item We then globally optimize the parametric functions using a functional optimization problem, 
		and devise an iterative algorithm with the optimality distance of the solutions being minimized.
		We call the devised algorithm Optimal Parametric Time-Varying Optimization (OP-TVO).
		
		\item We compare OP-TVO with a prediction-correction method from the literature, 
		from the optimality and computational complexity perspectives.
	\end{itemize}

	\textbf{Notations}: In this paper, we use lower-case $a$ for scalars, 
	bold-face lower-case $\textbf{a}$ for vectors 
	and bold-face uppercase $\textbf{A}$ for matrices. 
	Further, $\textbf{A}^\top$ is the transpose of $\textbf{A}$,
	$\|\textbf{a}\|$ is the euclidean norm of $\textbf{a}$,
	$\boldsymbol{\nabla}_{\textbf{a}} g(\cdot)$ and $\boldsymbol{\nabla}^2_{\textbf{a}} g(\cdot)$ 
	are the gradient vector and Hessian matrix 
	of multivariate function $g(\textbf{a})$ with respect to (w.r.t.) vector $\textbf{a}$, respectively.
	We show the components of a $n$-dimensional column vector $\boldsymbol{a}$ using the notation 
	$\boldsymbol{a}=[a_1,\ldots,a_n]^\top$. 
	Further, $\{a_n\}_1^N$ collects the components of vector $\boldsymbol{a}$ from $n=1$ to $n=N$.
	We use $\textbf{I}$, $\textbf{1}$ and $\textbf{0}$ to denote the identity matrix, 
	all-ones and all-zeros vectors, respectively.
 	We use $\dot{\boldsymbol{a}}(\theta )$ to represent the derivative of $\boldsymbol{a}(\theta)$ 
	w.r.t. $\theta$.
	
	\section{Problem Statement}\label{SecModel}
	This paper considers a class of nonlinear constrained optimization problems.
	The problem includes an objective function $f(\cdot):\mathbb{R}^N \rightarrow \mathbb{R}$,
	vector-valued constraint functions $\boldsymbol{h}_m(\cdot) = [h_{m,1},\ldots,h_{m,N}]^\top(\cdot):\mathbb{R}^N \rightarrow \mathbb{R}^N$
	and optimization variables $\boldsymbol{x}_m \in \mathbb{R}^N$ for $m\in\{1,\ldots,M\}$.
	The problem under study is:
	\begin{align}
		P_0:\qquad  &\min_{ \{\boldsymbol{x}_m \}_1^M }~ \sum_{m=1}^M a_m f(\boldsymbol{x_m}) \notag \\
		&~~~\mbox{s.t.}\quad\sum_{m=1}^M \boldsymbol{h}_m(\boldsymbol{x}_m) = \boldsymbol{u},
	\end{align}
	where $\boldsymbol{u} \in \mathbb{R}^N$.
	We assume that the optimal solution of Problem $P_0$ is known for the given non-zero parameters $a_m=p_{0,m}$, 
	with $m \in \{1,\ldots,M\}$. 
	However, the optimal solution is unknown and desired for the non-zero target parameters $a_m=p_{\tau,m}$.

	Note that this type of optimization problem arises in distributed optimizations or multi-agent systems
	where distinct agents produce agent-specific rewards and together make an overall cost function.
	The aim thus is obtaining an agent-specific variables $\boldsymbol{x}_m$ optimizing this overall cost function.
	This also may arise in constrained problems with an objective being established from different cost functions with distinct weights $\{a_m\}_1^M$. 
	
	The goal is to express $P_0$ based on a TVO problem with parametric functions,
	and devise a path-following method with convergence rate being optimized.
	To optimize it, we follow a functional optimization approach to design the parametric functions.
	As such, we parameterize $\{a_m\}_1^M$ with the parametric functions $\{b_m(\theta)\}_1^M$ 
	and parameter $\theta \in \mathbb{R}^+ \cup \{0\}$, so that 
	\begin{align}\label{EQ_limit}
	\!\! \lim_{\theta \rightarrow 0} b_m(\theta) = p_{0,m}, \qquad \lim_{\theta \rightarrow \tau} b_m(\theta) = p_{\tau,m},
	\end{align}
	for $m\in \{1,\ldots,M\}$. 
	Note that such parametric functions as presented in \eqref{EQ_limit} are not unique
	and we aim to find the optimal function throughout this work.

	We  then consider the following TVO problem
	\begin{align}
		P_1(\theta):\qquad \boldsymbol{x}^*(\theta)=\: &\underset{\{\boldsymbol{x}_m(\theta) \}_1^M}{\mathrm{argmin}}~ \sum_{m=1}^M b_m(\theta) f\big( \boldsymbol{x_m}(\theta) \big) \notag \\
		&~~~\mbox{s.t.}\quad \sum_{m=1}^M \boldsymbol{h}_m\big( \boldsymbol{x}_m(\theta) \big) = \boldsymbol{u},
	\end{align}
	where $\boldsymbol{x}(\theta) =[\boldsymbol{x}_1^\top,\ldots,\boldsymbol{x}_M^\top]^\top(\theta)$.
	As declared, we assume that the solution of $P_1(\theta)$ for $\theta=0$ is given, 
	and the solution at target $\theta=\tau>0$ is to be found.		
	
	A naive approach to find the solution of $P_1(\theta)$ is to use a Newton-based iterative algorithm.
	However, this approach suffers from low convergence rate
	and its solution optimality depends on a step-length parameter.
	
	Instead, we develop a time-varying approach to exploit the information of optimal solution of $P_1(0)$.
	We thus formulate a continuous-time dynamical system 
	\begin{align}\label{EQ_dynamic}
	\dot{\boldsymbol{x}}_m(\theta) = \boldsymbol{\phi}_m\big( \boldsymbol{x}(\theta),\theta \big)\!:~\mathbb{R}^{NM}\times\mathbb{R}^+\cup \{0\} \to \mathbb{R}^N,
	\end{align}
	with optimal trajectory solution denoted by $\boldsymbol{x}^*(\cdot)$.
	We then devise an iterative approach to predict  $\boldsymbol{x}^*(\cdot)$ by  $\boldsymbol{x}(\cdot)$
	so that the optimality distance $\| \boldsymbol{x}(\theta) - \boldsymbol{x}^*(\theta) \|$ 
	is minimized for $\theta \to \tau$.
	Note that \eqref{EQ_dynamic} shows a set of ODEs which should be solved with
	initial condition $\boldsymbol{x}(0)$ to give the target point $\boldsymbol{x}(\tau)$.
	As such, we intend to jointly solve TVO $P_1(\theta)$ and design $\{b_m(\theta)\}_1^M$ for $\theta\in [0,\tau]$
	such that the optimality distance is optimized.
	
	Let the following assumptions hold for TVO $P_1(\theta)$.
	
	
	\textbf{Assumption I:} 
	The objective function $f(\cdot)$ and constraints $\boldsymbol{h}_m(\cdot)$ are twice continuously differentiable 
	with respect to (w.r.t.) $\boldsymbol{x}_m$.
	
	\textbf{Assumption II:} 
	The matrices $\{\boldsymbol{\mathcal{J}}_m \}_1^M$ are invertible for $\theta \in [0,\tau]$,
	where $\boldsymbol{\mathcal{J}}_m \in \mathbb{R}^{N\times N}$ is the transpose of Jacobian matrix 
	of $\boldsymbol{h}_m(\cdot)$
	w.r.t. $\boldsymbol{x}_m(\theta)$.
	
	\section{ ODEs associated with TVO $P_1(\theta$) }
	\begin{propos}
	The	solution of Karush–Kuhn–Tucker conditions of problem $P_1(\theta)$, for $\theta\in [0,\tau]$, 
	can be found by the pair $(\boldsymbol{x}(\theta),\boldsymbol{\lambda})$ which follows the dynamical system \eqref{EQ_dynamic} with:
	\begingroup\makeatletter\def\f@size{9.5}\check@mathfonts
	\begin{align}\label{EQ_ODE1}
	&\boldsymbol{\phi}_m\big( \boldsymbol{x}(\theta),\theta \big) =\notag \\
	&~- \Big( b_m(\theta) \nabla_m^2 f + \sum_{n=1}^N \lambda_n \nabla^2_m h_{m,n} \Big)^{-1}
	\boldsymbol{\mathcal{J}}_m \:\Big(\boldsymbol{\dot{\lambda}} - \frac{\dot{b}_m(\theta)}{b_m(\theta)} \boldsymbol{\lambda} \Big),
	\end{align}
	\endgroup
	where 
	\begin{align}\label{EQ_lambda}
		\boldsymbol{\lambda} = -b_m(\theta) \boldsymbol{\mathcal{J}}_m^{-1} \nabla_m f, ~~~~~{\rm for}~~m\in \{1,\ldots,M\},
	\end{align}
	and $\boldsymbol{\dot{\lambda}} $ is obtained using
	\begin{align}\label{EQ_constraint}
	\sum_{m=1}^M \boldsymbol{\mathcal{J}}_m^\top \: \dot{\boldsymbol{x}}_m(\theta) = \boldsymbol{0}.
	\end{align}
	\end{propos}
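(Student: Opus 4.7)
The plan is to read the proposition as a classical parametric-KKT argument: write the stationarity and primal-feasibility conditions for $P_1(\theta)$, treat them as identities in $\theta$ along the optimal path, and differentiate them to produce the claimed ODEs.

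First, I would form the Lagrangian $L(\{\boldsymbol{x}_m\},\boldsymbol{\lambda},\theta) = \sum_{m=1}^{M} b_m(\theta) f(\boldsymbol{x}_m) + \boldsymbol{\lambda}^{\top}\bigl(\sum_{m=1}^{M}\boldsymbol{h}_m(\boldsymbol{x}_m) - \boldsymbol{u}\bigr)$, with a single multiplier $\boldsymbol{\lambda}\in\mathbb{R}^N$ for the vector equality constraint. The stationarity condition $\nabla_{\boldsymbol{x}_m}L=\boldsymbol{0}$ reads $b_m(\theta)\nabla_m f + \boldsymbol{\mathcal{J}}_m\boldsymbol{\lambda} = \boldsymbol{0}$, since the columns of $\boldsymbol{\mathcal{J}}_m$ are exactly $\nabla_m h_{m,n}$. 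Assumption~II lets me invert $\boldsymbol{\mathcal{J}}_m$ and solve this for $\boldsymbol{\lambda}$, which gives \eqref{EQ_lambda}; note that the identity must hold for every $m$, which is automatic because the same multiplier $\boldsymbol{\lambda}$ enforces the single coupling constraint.

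Next, I would differentiate the stationarity condition in $\theta$ along the optimal trajectory. Using the product rule on the first term yields $\dot b_m\nabla_m f + b_m\nabla_m^2 f\,\dot{\boldsymbol{x}}_m$. The key manipulation is on $\boldsymbol{\mathcal{J}}_m\boldsymbol{\lambda}=\sum_n\lambda_n\nabla_m h_{m,n}$: differentiating column-wise gives $\boldsymbol{\mathcal{J}}_m\dot{\boldsymbol{\lambda}} + \bigl(\sum_n\lambda_n\nabla_m^2 h_{m,n}\bigr)\dot{\boldsymbol{x}}_m$, i.e., the Hessians of the constraint entries appear via the chain rule in $\boldsymbol{x}_m(\theta)$. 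Using \eqref{EQ_lambda} to eliminate $\nabla_m f$ in the $\dot b_m$ term produces $-(\dot b_m/b_m)\boldsymbol{\mathcal{J}}_m\boldsymbol{\lambda}$, and collecting the terms multiplying $\dot{\boldsymbol{x}}_m$ assembles the Hessian of the Lagrangian block $b_m\nabla_m^2 f + \sum_n\lambda_n\nabla_m^2 h_{m,n}$. Inverting this block (tacitly required; this is the second-order sufficiency condition one expects in any path-following argument) and solving for $\dot{\boldsymbol{x}}_m$ reproduces \eqref{EQ_ODE1}. Finally, differentiating the primal feasibility identity $\sum_m\boldsymbol{h}_m(\boldsymbol{x}_m(\theta)) = \boldsymbol{u}$ in $\theta$, and using that the Jacobian of $\boldsymbol{h}_m$ is $\boldsymbol{\mathcal{J}}_m^{\top}$, gives \eqref{EQ_constraint}; substituting \eqref{EQ_ODE1} into it yields an $N\times N$ linear system (summed over $m$) that determines $\dot{\boldsymbol{\lambda}}$ and closes the ODE system.

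The main obstacle I anticipate is the chain-rule expansion of $d(\boldsymbol{\mathcal{J}}_m\boldsymbol{\lambda})/d\theta$: the bookkeeping needed to keep track of which factors depend on $\theta$ explicitly versus through $\boldsymbol{x}_m(\theta)$, and to see that exactly the Hessian-of-Lagrangian combination emerges in front of $\dot{\boldsymbol{x}}_m$, is the delicate step where sign and index errors are easy. Beyond that, the argument is essentially an application of the implicit function theorem to the KKT map parameterized by $\theta$, with the only structural assumption beyond Assumptions~I–II being the (implicit) invertibility of the Lagrangian Hessian block used to solve for $\dot{\boldsymbol{x}}_m$.
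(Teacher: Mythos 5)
Your proposal is correct and follows essentially the same route as the paper's proof: form the Lagrangian, write the KKT stationarity and primal-feasibility conditions, differentiate them along the trajectory in $\theta$, and combine using the invertibility of $\boldsymbol{\mathcal{J}}_m$ (Assumption II) to obtain \eqref{EQ_ODE1}--\eqref{EQ_constraint}. Your remark that inverting the Lagrangian Hessian block $b_m\nabla_m^2 f + \sum_n\lambda_n\nabla_m^2 h_{m,n}$ is tacitly required is a fair observation; the paper leaves this implicit in the proposition (it is only later covered in spirit by Assumption III).
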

	\begin{proof}
	Please refer to Appendix \ref{App1}.
	\end{proof}
	According to \eqref{EQ_ODE1}, the dynamical system $\{\boldsymbol{\phi}_m(\cdot,\cdot) \}_1^M$ has been formulated 
	based on a nonlinear combination of two parametric functions, ${b}_m(\theta)$ and $\dot{b}_m(\theta)$. 
	In the sequel, we use a decomposition trick to express the dynamical system as a linear combination of parametric functions.

	\subsection{Reparameterizing based on a Decomposition}
	We introduce the parametric functions  
	\begin{align}\label{EQ_reparamFunc}
	c_m(\theta):=\dfrac{\dot{b}_m(\theta)}{b_m(\theta)}, \qquad m\in \{1,\ldots,M\},
	\end{align}
	which should satisfy $ \int_0^\tau c_m(\theta)d\theta = \log\Big( \dfrac{p_{\tau,m}}{p_{0,m}} \Big):=\psi_m$
	based on \eqref{EQ_limit}.
	We then have:
	\begin{thm}
	The dynamical system \eqref{EQ_ODE1} can be re-parameterized based on 
	the following linear combination of $\{c_m(\theta)\}_1^M$:
	\begin{align}\label{EQ_ODE_linear}
		 \dot{\boldsymbol{x}}(\theta) = \boldsymbol{\phi}\big( \boldsymbol{x}(\theta),\theta \big) = \boldsymbol{\Gamma}(\theta) \: \boldsymbol{c}(\theta),
	\end{align}
	where $\boldsymbol{\phi}(\cdot,\cdot) =[\boldsymbol{\phi}_1^\top,\ldots,\boldsymbol{\phi}_M^\top]^\top(\cdot,\cdot)$, 
	\begin{align*}
	\boldsymbol{\Gamma}(\theta) &= 
	\begin{pmatrix}
		\boldsymbol{\gamma}_{11} & \ldots & \boldsymbol{\gamma}_{1M} \\	
		\vdots&		\vdots&		\vdots \\
		\boldsymbol{\gamma}_{N1} & \ldots & \boldsymbol{\gamma}_{NM}
	\end{pmatrix} ~ \in ~ \mathbb{R}^{NM\times M},\\
	\boldsymbol{\gamma}_{nm} &= -G_n^{-1}\bigg( \Big( \sum_{k=1}^M D_k\Big)^{-1} D_m - \delta_{nm} \boldsymbol{I} \bigg)\boldsymbol{1},\\ 
	G_n &= {\rm diag}(\boldsymbol{v})^{-1} \boldsymbol{\mathcal{J}}_n^{-1}\: \bigg(  \nabla_n^2f + \sum_{j=1}^N  v_j \nabla_n^2h_{n,j} \bigg),\\
	D_m &= \boldsymbol{\mathcal{J}}_m^\top \: G_m^{-1},
	\end{align*}
	for $n \in \{1,\ldots,N\}$ and $m \in \{1,\ldots,M\}$ 
	with 
	\begin{align}\label{EQ_nu}
	\boldsymbol{v} = -\boldsymbol{\mathcal{J}}_m^{-1}\: \nabla_m f.
	\end{align}
	\end{thm}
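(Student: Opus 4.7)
The plan is to start from the ODE in \eqref{EQ_ODE1} and progressively eliminate all non-linearities in the $c_m(\theta)$ by exploiting the structure of $\boldsymbol{\lambda}$. The guiding observation is equation \eqref{EQ_nu}: combining \eqref{EQ_lambda} with \eqref{EQ_nu} yields $\boldsymbol{\lambda}=b_m(\theta)\boldsymbol{v}$, so the $\theta$–dependence of $\boldsymbol{\lambda}$ factorizes through $b_m$. Differentiating,
\begin{align*}
\dot{\boldsymbol{\lambda}}-c_m(\theta)\boldsymbol{\lambda}
\;=\;\dot{b}_m\boldsymbol{v}+b_m\dot{\boldsymbol{v}}-\tfrac{\dot{b}_m}{b_m}b_m\boldsymbol{v}
\;=\;b_m\dot{\boldsymbol{v}},
\end{align*}
which will be the crucial cancellation making $b_m$ disappear from $\boldsymbol{\phi}_m$.

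Next I would rewrite the matrix inverse in \eqref{EQ_ODE1}. Using $\lambda_n=b_m v_n$, the bracketed matrix equals $b_m\bigl(\nabla_m^2 f+\sum_n v_n\nabla_m^2 h_{m,n}\bigr)$. By the definition of $G_n$ in the statement, this quantity is precisely $b_m\,\boldsymbol{\mathcal{J}}_m\,\mathrm{diag}(\boldsymbol{v})\,G_m$, and hence its inverse is $\tfrac{1}{b_m}G_m^{-1}\mathrm{diag}(\boldsymbol{v})^{-1}\boldsymbol{\mathcal{J}}_m^{-1}$. Substituting this together with $\dot{\boldsymbol{\lambda}}-c_m\boldsymbol{\lambda}=\dot{\boldsymbol{\lambda}}-c_m b_m\boldsymbol{v}$ into \eqref{EQ_ODE1}, and using $\mathrm{diag}(\boldsymbol{v})^{-1}\boldsymbol{v}=\boldsymbol{1}$ and $\tfrac{1}{b_m}\mathrm{diag}(\boldsymbol{v})^{-1}=\mathrm{diag}(\boldsymbol{\lambda})^{-1}$, I obtain the intermediate form
\begin{align*}
\boldsymbol{\phi}_m \;=\; -G_m^{-1}\,\mathrm{diag}(\boldsymbol{\lambda})^{-1}\,\dot{\boldsymbol{\lambda}} \;+\; c_m(\theta)\,G_m^{-1}\,\boldsymbol{1}.
\end{align*}
This isolates the two unknowns $\dot{\boldsymbol{\lambda}}$ and $\{c_m\}$, and the $m$-dependent scalars $b_m$ have disappeared.

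The remaining task is to eliminate $\dot{\boldsymbol{\lambda}}$ using the feasibility derivative \eqref{EQ_constraint}. Left-multiplying the intermediate form by $\boldsymbol{\mathcal{J}}_m^\top$, summing over $m$, and recognizing $D_m=\boldsymbol{\mathcal{J}}_m^\top G_m^{-1}$, the constraint becomes
\begin{align*}
\Bigl(\sum_{k=1}^M D_k\Bigr)\mathrm{diag}(\boldsymbol{\lambda})^{-1}\dot{\boldsymbol{\lambda}}\;=\;\sum_{k=1}^M c_k(\theta)\,D_k\boldsymbol{1},
\end{align*}
so $\mathrm{diag}(\boldsymbol{\lambda})^{-1}\dot{\boldsymbol{\lambda}}=(\sum_k D_k)^{-1}\sum_k c_k D_k\boldsymbol{1}$. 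Substituting this back and pulling $c_k$ outside the sum gives
\begin{align*}
\boldsymbol{\phi}_m \;=\; \sum_{k=1}^M c_k(\theta)\,\Bigl[-G_m^{-1}\bigl((\textstyle\sum_{j}D_j)^{-1}D_k-\delta_{mk}\boldsymbol{I}\bigr)\boldsymbol{1}\Bigr],
\end{align*}
which, after stacking the blocks $\boldsymbol{\phi}_m$ vertically, is exactly $\dot{\boldsymbol{x}}=\boldsymbol{\Gamma}(\theta)\boldsymbol{c}(\theta)$ with the claimed entries $\boldsymbol{\gamma}_{mk}$.

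The main obstacle is the bookkeeping in the first reduction: identifying that the Hessian-plus-curvature block factors cleanly through the defined $G_m$, and verifying the cancellation $\tfrac{1}{b_m}\mathrm{diag}(\boldsymbol{v})^{-1}=\mathrm{diag}(\boldsymbol{\lambda})^{-1}$ that removes $b_m$ from the dynamics. Once that reduction is in hand, eliminating $\dot{\boldsymbol{\lambda}}$ from the feasibility derivative is a routine linear solve and the linear-in-$c$ structure follows immediately.
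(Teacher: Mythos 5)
Your proposal is correct and follows essentially the same route as the paper: you factor the Hessian-plus-curvature block as $\boldsymbol{\mathcal{J}}_m\,\mathrm{diag}(\boldsymbol{\lambda})\,G_m$ (equivalently, verify the paper's decomposition \eqref{EQ_decompose} from the definition of $G_m$ and $\boldsymbol{\lambda}=b_m\boldsymbol{v}$), reduce \eqref{EQ_ODE1} to $\dot{\boldsymbol{x}}_m=-G_m^{-1}\mathrm{diag}(\boldsymbol{\lambda})^{-1}(\dot{\boldsymbol{\lambda}}-c_m\boldsymbol{\lambda})$, eliminate $\dot{\boldsymbol{\lambda}}$ through \eqref{EQ_constraint} via the $D_m$ matrices, and substitute back to expose the linear-in-$\boldsymbol{c}$ structure. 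The only cosmetic differences are your unused opening remark that $\dot{\boldsymbol{\lambda}}-c_m\boldsymbol{\lambda}=b_m\dot{\boldsymbol{v}}$ and your block indexing by $m$, which is in fact cleaner than the paper's $n$-indexed blocks.
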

	\begin{proof}
		We introduce $N$-by-$N$ matrices $\{G_m\}_1^M$ and exploit the following decomposition: 
		\begin{align}\label{EQ_decompose}
			b_m(\theta) \nabla_m^2 f + \sum_{n=1}^N \lambda_n \nabla^2_m h_{m,n} = \boldsymbol{\mathcal{J}}_m\: {\rm  diag}(\boldsymbol{\lambda})G_m.
		\end{align}
		Then, we get:
		\begin{align}\label{EQ_G}
		G_m = {\rm diag}(\boldsymbol{v})^{-1} \boldsymbol{\mathcal{J}}_m^{-1}\:\Big( \nabla_m^2 f + \sum_{n=1}^N v_n \nabla_m^2 h_{m,n} \Big),
		\end{align}
		for which we used $\boldsymbol{\lambda} = b_m(\theta)\boldsymbol{v}$ based on \eqref{EQ_lambda} and \eqref{EQ_nu}.
		Equation \eqref{EQ_G} shows that $G_m$ is notably independent of parametric function $b_n(\theta)$.
		Now, by plugging \eqref{EQ_decompose} into \eqref{EQ_ODE1}, we obtain:
		\begingroup\makeatletter\def\f@size{9.5}\check@mathfonts
		\begin{align*}
			\dot{\boldsymbol{x}}_n(\theta) = -G_n^{-1}	{\rm diag}(\boldsymbol{\lambda})^{-1} \big( \dot{\boldsymbol{\lambda}}-c_n(\theta)\boldsymbol{\lambda}\big ),
		\end{align*}
		\endgroup
		and according to \eqref{EQ_constraint}, we get:
		\begingroup\makeatletter\def\f@size{9.5}\check@mathfonts
		\begin{align*}
		\boldsymbol{\dot{\lambda}} = \boldsymbol{\lambda}\: \Big(\sum_{k=1}^M D_k \Big)^{-1} \sum_{m=1}^M D_m c_m(\theta) \:\boldsymbol{1},
		\end{align*}
		\endgroup
		which together yields:
		\begingroup\makeatletter\def\f@size{9.5}\check@mathfonts
		\begin{align*}
			\dot{\boldsymbol{x}}_n(\theta) = -G_n^{-1}\bigg( \Big(\sum_{k=1}^M D_k \Big)^{-1} \sum_{m=1}^M D_m c_m(\theta) -c_n(\theta)\boldsymbol{I} \bigg)\boldsymbol{1}.
		\end{align*}
		\endgroup
		Based on the definition of $\boldsymbol{\gamma}_{nm}$, it thus reads:
		\begin{align*} 
		\dot{\boldsymbol{x}}_n(\theta) &= \sum_{m=1}^M \boldsymbol{\gamma}_{nm} c_m(\theta),~~~n\in \{1,\ldots,N\},
		\end{align*}
		which proves the statement.
	\end{proof}

	Note that the dynamical system \eqref{EQ_ODE_linear} depends only on $\boldsymbol{c}(\cdot)$ 
	and not on $\boldsymbol{b}(\cdot)$.
	Moreover, considering that $\boldsymbol{\Gamma}(\cdot)$ does not depend on $\boldsymbol{c}(\cdot)$,
	the dynamical system portrays a linear parametric expression w.r.t. $\boldsymbol{c}(\cdot)$.
	It enables us to find the condition in which solving \eqref{EQ_ODE1} or equivalently \eqref{EQ_ODE_linear} leads to a unique solution.
	Let us first make the following assumptions.
		
	\textbf{Assumption III:} 
	The matrices $\{\nabla_n^2 (f+\boldsymbol{v}^\top\boldsymbol{h}_n)\}_1^N$, $\sum_{k=1}^M D_k$ and $\mbox{diag}(\boldsymbol{v})$ are invertible.
	
	\textbf{Assumption IV:} 
	The derivatives of $\{\nabla_m f\}_1^M$, $\{\boldsymbol{\mathcal{J}}_m \}_1^M$ and $\{\nabla_n^2 (f+\boldsymbol{v}^\top\boldsymbol{h}_n)\}_1^N$ 
	w.r.t. $\{\boldsymbol{x}_m(\theta)\}_1^M$ are bounded.

	\begin{propos}
		If \textbf{Assumptions I, II, III} and \textbf{IV} hold, then the dynamical system \eqref{EQ_ODE_linear} has a unique solution.
	\end{propos}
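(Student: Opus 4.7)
The plan is to recognize \eqref{EQ_ODE_linear} as an initial value problem of the form $\dot{\boldsymbol{x}}(\theta)=\boldsymbol{\phi}(\boldsymbol{x}(\theta),\theta)$ on the interval $[0,\tau]$ with the prescribed initial condition $\boldsymbol{x}(0)$, and then invoke the Picard--Lindel\"of (Cauchy--Lipschitz) theorem. To apply it, I need to show that the right-hand side $\boldsymbol{\phi}(\boldsymbol{x},\theta)=\boldsymbol{\Gamma}(\theta)\boldsymbol{c}(\theta)$ is (i) well-defined and continuous on the relevant domain, and (ii) locally Lipschitz continuous in $\boldsymbol{x}$ uniformly in $\theta$.

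First I would verify that every building block of $\boldsymbol{\Gamma}(\theta)$ is well-defined. Assumption~I ensures continuity of $\nabla_m f$, $\boldsymbol{\mathcal{J}}_m$, and $\nabla_n^2 h_{m,n}$ in $\boldsymbol{x}$; Assumption~II ensures that the auxiliary vector $\boldsymbol{v}$ in \eqref{EQ_nu} is well-defined via $\boldsymbol{\mathcal{J}}_m^{-1}$; Assumption~III then guarantees that $\mbox{diag}(\boldsymbol{v})$, $G_n$ (through invertibility of $\nabla_n^2(f+\boldsymbol{v}^\top \boldsymbol{h}_n)$), and $\sum_{k=1}^M D_k$ can all be inverted, so that the closed form of $\boldsymbol{\gamma}_{nm}$ makes sense pointwise. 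Combining these, $\boldsymbol{\Gamma}(\cdot,\cdot)$ is a continuous matrix-valued function of $(\boldsymbol{x},\theta)$, and since $\boldsymbol{c}(\theta)$ can be taken continuous, $\boldsymbol{\phi}$ is continuous.

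Next I would establish the Lipschitz bound. Assumption~IV provides boundedness of the derivatives of $\nabla_m f$, $\boldsymbol{\mathcal{J}}_m$, and $\nabla_n^2(f+\boldsymbol{v}^\top\boldsymbol{h}_n)$ with respect to $\boldsymbol{x}$, which means each of these mappings is locally Lipschitz in $\boldsymbol{x}$. I would then propagate this Lipschitz property through the algebraic operations defining $\boldsymbol{\Gamma}$: sums and products of locally Lipschitz, locally bounded maps are locally Lipschitz, and the identity $A^{-1}-B^{-1}=A^{-1}(B-A)B^{-1}$ transfers Lipschitzness through matrix inversion provided the inverses remain bounded on the domain of interest, which follows from the uniform invertibility guaranteed by Assumptions~II and III on the compact interval $[0,\tau]$. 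Concluding, $\boldsymbol{\phi}(\boldsymbol{x},\theta)$ is locally Lipschitz in $\boldsymbol{x}$ uniformly in $\theta\in[0,\tau]$, and Picard--Lindel\"of yields existence and uniqueness of the trajectory $\boldsymbol{x}(\theta)$ for $\theta\in[0,\tau]$.

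The main obstacle I anticipate is the bookkeeping required to carry Lipschitz continuity through the nested matrix inversions ($G_n^{-1}$ inside $\boldsymbol{\gamma}_{nm}$, and $(\sum_k D_k)^{-1}$ appearing in the same expression) in a way that yields a \emph{uniform} Lipschitz constant over $[0,\tau]$. To handle this cleanly, I would argue on a tubular neighborhood of the candidate trajectory where the operator norms $\|G_n^{-1}\|$ and $\|(\sum_k D_k)^{-1}\|$ are uniformly bounded thanks to Assumption~III, and then appeal to continuity in $\theta$ to extend the bound to the full interval. Everything else reduces to routine estimates once this uniformity is secured.
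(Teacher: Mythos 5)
Your proposal follows essentially the same route as the paper: the paper's proof is a one-line appeal to the Picard--Lindel\"of theorem together with the equivalence between Lipschitz continuity and boundedness of derivatives (Assumption~IV), which is exactly the argument you spell out in detail. Your additional bookkeeping on well-definedness of $\boldsymbol{\Gamma}$ via Assumptions~II--III and on propagating Lipschitzness through the matrix inversions is a faithful elaboration of that same idea, not a different method.
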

	\begin{proof}
		By applying Picard–Lindelöf theorem and considering the equivalence between Lipschitz continuity and derivative boundedness, the statement follows.
	\end{proof}
	
	The linear form of \eqref{EQ_ODE_linear} also enables us to design $\boldsymbol{c}(\cdot)$ such that the optimality distance is optimized.
	Having $\boldsymbol{c}(\cdot)$ being designed, 
	we can get: $b_m(\theta)=p_{0,m}\exp\big( \int_0^{\theta} c_m(\xi)d\xi \big)$ 
	based on \eqref{EQ_limit} and \eqref{EQ_reparamFunc}.
	
	\section{Optimality Distance}
	Equation \eqref{EQ_ODE_linear} shows a set of ODEs that is intricate to precisely solve
	due to highly non-linearity w.r.t. $\theta$.
	However, one approach is to use the Euler method \cite{Euler} to approximate $\boldsymbol{x}(\cdot)$ 
	by $\hat{\boldsymbol{x}}(\cdot)$ based on a sequential manner:
	\begin{align}\label{EQ_App}
	\!\!\!\!  \hat{\boldsymbol{x}}(\theta) = \hat{\boldsymbol{x}}(\theta-\Delta\theta) 
	+ \Delta\theta \: \hat{\dot{\boldsymbol{x}}}(\theta-\Delta\theta), \qquad \theta \in (0,\tau],
	\end{align}
	where $\Delta\theta$ is the incremental step 
	and $
	\hat{\dot{\boldsymbol{x}}}(\theta) = \boldsymbol{\phi}\big( \hat{\boldsymbol{x}}(\theta),\theta \big)$.
	For this method, the optimality distance $O_d$ is upper-bounded by:
	\begingroup\makeatletter\def\f@size{9.5}\check@mathfonts
	\begin{align*}
	\!\!\!	O_d = \| \hat{\boldsymbol{x}}(\tau) - \boldsymbol{x}(\tau) \| \leq 
		\frac{ \Delta\theta^2}{2} \sum_{j=1}^L \| \ddot{\boldsymbol{x}}(\tau-j\Delta\theta) \| +
		\mathcal{O}(L\Delta\theta^3),
	\end{align*}
	\endgroup
	where $\tau=L\Delta\theta$.
	This shows that optimality distance is limited by order of $\Delta\theta^2$.
	However, based on \eqref{EQ_App} another upper-bound can be found as follows:
	\begin{align}\label{EQ_OptimalityGap}
		O_d = \left\| \int_0^\tau  \big( \dot{\boldsymbol{x}}(\theta) - \hat{\boldsymbol{m}} \big) d\theta \right\| 
		\leq  \int_0^\tau \| \dot{\boldsymbol{x}}(\theta) - \hat{\boldsymbol{m}} \| d\theta,
	\end{align}
	where
	$$
	\hat{\boldsymbol{m}} = \frac{1}{L} \sum_{j=1}^L \hat{\dot{\boldsymbol{x}}}(\tau-j\Delta\theta).
	$$
	Note that $\hat{\boldsymbol{m}}$ does not depend on $\theta$.
	Consequently, minimizing the upper-bound of $O_d$, i.e., $\int_0^\tau \| \dot{\boldsymbol{x}}(\theta) - \hat{\boldsymbol{m}} \| d\theta$, 
	leads to the optimality distance being minimized.
	
	\section{Optimality Distance Minimization }
	We consider the following functional optimization problem (FOP) to jointly design the parametric functions 
	$\boldsymbol{c}(\cdot)$ and find the optimum solution $\boldsymbol{x}(\cdot)$:
	\begin{align}\label{EQ_J1}
		J_1: \qquad \min_{  \boldsymbol{x}(\cdot), \boldsymbol{c}(\cdot)  }~ & \int_0^\tau  \| \dot{\boldsymbol{x}}(\theta) - \hat{\boldsymbol{m}} \|^2 d\theta \notag \\
		\mbox{s.t.}~~~& \dot{\boldsymbol{x}}(\theta) = \boldsymbol{\Gamma}(\theta) \: \boldsymbol{c}(\theta), \notag \\
		\mbox{s.t.}~~~& \int_0^\tau  \boldsymbol{c}(\theta)d\theta =  \boldsymbol{\psi}.
	\end{align}
	By solving FOP $J_1$, we can achieve the optimal solution $\boldsymbol{x}(\theta)$
	which minimizes the optimality distance 	$O_d$ 
	based on \eqref{EQ_OptimalityGap}.
	To solve \eqref{EQ_J1}, we constitute the Hamiltonian $\mathcal{H}$ as:
	\begingroup\makeatletter\def\f@size{9.5}\check@mathfonts
	$$
	\mathcal{H} =   \| \dot{\boldsymbol{x}}(\theta) - \hat{\boldsymbol{m}} \|^2 
	+\boldsymbol{w}(\theta)^\top \big(\dot{\boldsymbol{x}}(\theta) - \boldsymbol{\Gamma}(\theta) \: \boldsymbol{c}(\theta) \big) 
	+ \boldsymbol{\lambda}^\top \Big( \boldsymbol{c}(\theta)-\frac{1}{\tau}\boldsymbol{\psi} \Big),
	$$
	\endgroup
	where $\boldsymbol{w}(\theta)$ is a co-state variables and $\boldsymbol{\lambda}$ is a Lagrange multiplier.
	Using calculus of variations, the functional solution of \eqref{EQ_J1} is obtained by:
	\begin{equation}
	\begin{aligned}\label{EQ_condJ1}
	\begin{cases}
			\nabla_{\boldsymbol{c}(\theta)} \mathcal{H} = 2\boldsymbol{\Gamma}(\theta)^\top \Big(\boldsymbol{\Gamma}(\theta) \boldsymbol{c}(\theta)-\hat{\boldsymbol{m}}-\frac12\boldsymbol{w}(\theta)  \Big)+\boldsymbol{\lambda}=0\vspace{6pt},\\
			\nabla_{\boldsymbol{x}(\theta)} \mathcal{H} - \frac{d}{d\theta} \nabla_{\dot{\boldsymbol{x}}(\theta)} \mathcal{H} \\
			\qquad ~ \quad = \boldsymbol{w}(\theta)^\top \:\nabla_{\boldsymbol{x}(\theta)} \boldsymbol{\Gamma}(\theta) \:\boldsymbol{c}(\theta)+2\ddot{\boldsymbol{x}}(\theta)+\dot{\boldsymbol{w}}(\theta)=0, \vspace{6pt}\\
			\dot{\boldsymbol{x}}(\theta) - \boldsymbol{\Gamma}(\theta) \: \boldsymbol{c}(\theta) = 0, \qquad \quad
			\int_0^\tau \boldsymbol{c}(\theta)d\theta - \boldsymbol{\psi}=0.
		\end{cases}
	\end{aligned}
	\end{equation}
	The conditions in \eqref{EQ_condJ1} are intricate to solve.
	Further, an estimation of $\boldsymbol{\hat{m}}$ is needed in advance.
	This motivates us to devise an iterative algorithm to find the solution of $J_1$.
	
	In this regard, we perform an iterative algorithm as follows:
	In the beginning, we initialize $\boldsymbol{c}(\cdot)$ such that 
	$\int_0^\tau  \boldsymbol{c}(\theta)d\theta =  \boldsymbol{\psi}$.
	Then, we repeatedly follow two consecutive steps till the algorithm converges.
	These are the \textit{Prediction} and \textit{Parametral-tuning} steps.
	In the prediction step of this iterative approach, 
	we sequentially solve \eqref{EQ_App} using \eqref{EQ_ODE_linear}
	and recently updated $\boldsymbol{c}(\cdot)$, in order to predict $\boldsymbol{x}(\theta)$ for $\theta \in (0,\tau]$. 
	In the parametral-tuning step, we minimize the functional objective 
	$\int_0^\tau \| \dot{\boldsymbol{x}}(\theta) - \hat{\boldsymbol{m}} \|^2 d\theta$ w.r.t. $\boldsymbol{c}(\cdot)$ 
	with $\hat{\boldsymbol{m}}$ being obtained based on the solution of prediction step.
	As declared, we execute these two steps till the convergence.
	We call this algorithm Optimal Parametric Time-Varying Optimization (OP-TVO).
	
    More specifically, we consider the following FOP, in the second step of OP-TVO, to optimize the parametric functions 
    $\boldsymbol{c}(\cdot)$:
	\begingroup\makeatletter\def\f@size{9.75}\check@mathfonts
	\begin{align}\label{EQ_J2}
	\! J_2:\quad  \min_{ \boldsymbol{c}(\cdot)  }~ & \int_0^\tau 
	\Big\| \boldsymbol{\Gamma}(\theta)\:\boldsymbol{c}(\theta)-\hat{\boldsymbol{m}} \Big\|^2 d\theta 
	+  \mu  \int_0^\tau \boldsymbol{c}(\theta)^\top\boldsymbol{c}(\theta) d\theta \notag \\
	\mbox{s.t.}~~& \int_0^\tau \boldsymbol{c}(\theta)d\theta = \boldsymbol{\psi}~,
	\end{align}
	\endgroup
	where the term $\int_0^\tau \boldsymbol{c}(\theta)^\top\boldsymbol{c}(\theta) d\theta$ 
	is additionally added to regulate the smoothness of $\boldsymbol{c}(\theta)$ w.r.t. $\theta$, 
	and $0 < \mu \ll 1$ is the regulation coefficient.
	
	\begin{propos}
	The globally optimal solution of $J_2$ is obtained by:	
	\begin{align}\label{EQ_OptFun2}			
		\boldsymbol{c}(\theta)= \boldsymbol{\Pi}^{-1}(\theta)  
		\left( \boldsymbol{\Gamma}(\theta)^\top \hat{\boldsymbol{m}} -  \boldsymbol{\lambda}\right),
	\end{align}
	where 
	\begin{align*}
		&\boldsymbol{\Pi}(\theta) =  \boldsymbol{\Gamma}(\theta)^\top \boldsymbol{\Gamma}(\theta) + \mu \boldsymbol{I} ,\\
		&\boldsymbol{\lambda} = \left( \int_0^\tau \boldsymbol{\Pi}^{-1}(\theta) d\theta \right)^{-1}
		\left( \int_0^\tau \boldsymbol{\Pi}^{-1}(\theta) \boldsymbol{\Gamma}(\theta)^\top  d\theta \: \hat{\boldsymbol{m}} - \boldsymbol{\psi}  \right).
	\end{align*}
	\end{propos}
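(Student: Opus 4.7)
The plan is to treat $J_2$ as a strictly convex functional optimization problem with a single affine integral constraint, and then apply calculus of variations pointwise. Since the integrand does not depend on $\dot{\boldsymbol{c}}(\theta)$, the Euler--Lagrange equations collapse to a simple stationarity condition at each $\theta$, which can be solved algebraically.

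First I would form the Lagrangian
\begin{equation*}
\mathcal{L}(\boldsymbol{c},\boldsymbol{\lambda}) = \int_0^\tau \Big( \|\boldsymbol{\Gamma}(\theta)\boldsymbol{c}(\theta) - \hat{\boldsymbol{m}}\|^2 + \mu\,\boldsymbol{c}(\theta)^\top \boldsymbol{c}(\theta) \Big) d\theta + \boldsymbol{\lambda}^\top \Big( \boldsymbol{\psi} - \int_0^\tau \boldsymbol{c}(\theta)d\theta \Big),
\end{equation*}
and take its functional derivative with respect to $\boldsymbol{c}(\theta)$. Because the integrand is a pointwise function of $\boldsymbol{c}(\theta)$ only, stationarity yields
\begin{equation*}
2\boldsymbol{\Gamma}(\theta)^\top \bigl(\boldsymbol{\Gamma}(\theta)\boldsymbol{c}(\theta)-\hat{\boldsymbol{m}}\bigr) + 2\mu\,\boldsymbol{c}(\theta) - \boldsymbol{\lambda} = \boldsymbol{0},
\end{equation*}
which, after absorbing the factor of $2$ into $\boldsymbol{\lambda}$, rearranges to $\boldsymbol{\Pi}(\theta)\boldsymbol{c}(\theta) = \boldsymbol{\Gamma}(\theta)^\top \hat{\boldsymbol{m}} - \boldsymbol{\lambda}$. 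Since $\boldsymbol{\Pi}(\theta) = \boldsymbol{\Gamma}(\theta)^\top \boldsymbol{\Gamma}(\theta) + \mu \boldsymbol{I}$ is the sum of a positive semidefinite matrix and a positive definite one for $\mu>0$, it is invertible, and I recover the claimed closed form $\boldsymbol{c}(\theta) = \boldsymbol{\Pi}^{-1}(\theta)\bigl(\boldsymbol{\Gamma}(\theta)^\top \hat{\boldsymbol{m}} - \boldsymbol{\lambda}\bigr)$.

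Next, I would determine $\boldsymbol{\lambda}$ by substituting this expression into the integral constraint $\int_0^\tau \boldsymbol{c}(\theta)d\theta = \boldsymbol{\psi}$. This gives
\begin{equation*}
\int_0^\tau \boldsymbol{\Pi}^{-1}(\theta) \boldsymbol{\Gamma}(\theta)^\top d\theta\,\hat{\boldsymbol{m}} - \Big(\int_0^\tau \boldsymbol{\Pi}^{-1}(\theta) d\theta\Big) \boldsymbol{\lambda} = \boldsymbol{\psi},
\end{equation*}
from which I solve for $\boldsymbol{\lambda}$ exactly as stated. The matrix $\int_0^\tau \boldsymbol{\Pi}^{-1}(\theta)d\theta$ is invertible because each $\boldsymbol{\Pi}^{-1}(\theta)$ is positive definite, so its integral over a non-degenerate interval is also positive definite.

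Finally, I would establish that this stationary point is a \emph{global} minimizer, not merely a critical one. The objective is strictly convex in $\boldsymbol{c}$ (the Hessian of the integrand at each $\theta$ equals $2\boldsymbol{\Pi}(\theta) \succ 0$), and the feasible set defined by the affine constraint $\int_0^\tau \boldsymbol{c}(\theta)d\theta = \boldsymbol{\psi}$ is convex. Hence any stationary point of the Lagrangian satisfying the constraint is the unique global minimizer. The only subtlety worth being careful about is the invertibility of $\int_0^\tau \boldsymbol{\Pi}^{-1}(\theta)d\theta$; this is the main (mild) technical point, and it is handled by the positive-definiteness argument above. Everything else is routine functional calculus.
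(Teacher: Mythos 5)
Your proposal is correct and follows essentially the same route as the paper: exploit convexity of $J_2$ (strict, thanks to $\mu>0$) so that the Euler--Lagrange/stationarity condition $\boldsymbol{\Gamma}(\theta)^\top(\boldsymbol{\Gamma}(\theta)\boldsymbol{c}(\theta)-\hat{\boldsymbol{m}})+\mu\boldsymbol{c}(\theta)+\boldsymbol{\lambda}=\boldsymbol{0}$ together with the integral constraint characterizes the global optimum, then solve these two conditions in closed form. The only difference is that you spell out the steps the paper leaves implicit in ``by solving \eqref{EQ_Aux1}, the statement follows'' --- the invertibility of $\boldsymbol{\Pi}(\theta)$ and of $\int_0^\tau \boldsymbol{\Pi}^{-1}(\theta)d\theta$, and the elimination of $\boldsymbol{\lambda}$ via the constraint --- which is a welcome addition rather than a deviation.
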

	\begin{proof}
		Considering that the dynamical system \eqref{EQ_ODE_linear} has been expressed based on a linear combination
		of parametric functions $\boldsymbol{c}(\cdot)$, $J_2$ is a convex FOP.
		This implies that the globally optimal solution of $J_2$ can be found 
		by applying the Euler-Lagrange equation on $J_2$ \cite{Seierstad1977}. We thus get:
		\begin{align}\label{EQ_Aux1}
		\begin{cases}
			\boldsymbol{\Gamma}(\theta)^\top ( \boldsymbol{\Gamma}(\theta) \: \boldsymbol{c}(\theta) - \hat{\boldsymbol{m}}) + \mu\boldsymbol{c}(\theta) + \boldsymbol{\lambda}&=0,~ \vspace{4 pt}\\
			\int_0^\tau \boldsymbol{c}(\theta)d\theta - \boldsymbol{\psi}&=0~.
		\end{cases}
		\end{align}
		By solving \eqref{EQ_Aux1}, the statement follows.
	\end{proof}
	
		\begin{algorithm}[t]
		\caption{Optimal Parametric Time-Varying Optimization (OP-TVO)}
		\label{Alg_TVO}
		\textbf{Input:} Optimal solution $\boldsymbol{x}(0)$. \\
		\textbf{Outputs:} Optimal solution $\boldsymbol{x}(\tau)$ and parametric functions $\boldsymbol{c}(\cdot)$.\\
		\textbf{Initialization}:\\
		$~~~$ Initialize $\boldsymbol{c}(\cdot)$  so that $\int_0^\tau  \boldsymbol{c}(\theta)d\theta =  \boldsymbol{\psi}$.\\
		$~~~$ Set $\rm flag=1$ and $\rm iter=0$.\\		
		\While{$\rm flag$}{
			iter = iter+1.\vspace{5 pt}\\
			\textbf{Prediction step}:\\
			Predict $\boldsymbol{x}(\theta)$ for $\theta \in (0,\tau]$ using \eqref{EQ_App} and \eqref{EQ_ODE_linear} and based on updated $\boldsymbol{c}(\cdot)$.\\\vspace{5 pt}
			\textbf{Parametral-tuning step:}\\
			Update $\boldsymbol{c}(\cdot)$ using \eqref{EQ_OptFun2} and based on predicted $\boldsymbol{x}(\theta)$ with $\theta \in (0,\tau]$.\vspace{5 pt}\\
			\If{Convergence}{
				$\rm flag=0$.
			}
		}
	\end{algorithm}
	Algorithm \ref{Alg_TVO} shows the pseudo-code of OP-TVO.
	For each iteration (${\rm iter}$), the prediction and parametral-tuning steps are performed
	to jointly predict the optimal solution $\boldsymbol{x}^*(\tau)$ 
	and design the parametric functions $\boldsymbol{c}(\cdot)$.
	We also need a metric for the convergence to stop the algorithm.
	For this, we track the value of $\hat{O}_d := $ $\bsum_{j=1}^L \Big\| \boldsymbol{\phi}\Big( \hat{\boldsymbol{x}}(\tau-j\Delta\theta),\tau-j\Delta\theta \Big)-\hat{\boldsymbol{m}} \Big\|^2 $ 
	as an estimation of the optimality distance.
	We thus consider that the algorithm has converged if the value of $\hat{O}_d$ 
	lies below a threshold $O_{\rm th}$.

	\section{Numerical Results and Discussion}
	To evaluate the devised algorithm OP-TVO, 
	we compare it with a Prediction-Correction Method (PCM) \cite{Simonetto2016},
	as well as with a \textit{Benchmark} solution being obtained using an extremely small incremental step $\Delta\theta=10^{-6}$,
	disregarding its computational complexity.
	Note that, we consider this \textit{Benchmark} as the optimal solution, 
	by which we can compute the optimality distance $O_d$.
	These algorithms have been implemented using Matlab R2022a on a $8\times1.70$ GHz
	Intel Core i5-10310U Processor, equipped with 16 GB of memory and 12 Mbytes of data cache. 
	
	We then consider the constrained optimization problem $E_1$ \cite{Ours_2020,Ours_2022}, 
	and change the constraints to add non-linearity to the problem.
	\begin{align*}
		E_1: \qquad &\min_{\{\boldsymbol{x}_m\}_1^M} ~~\sum_{m=1}^M a_m\erfc\left(\gamma_0 \dfrac{x_{m,1}}{\sqrt{2^{\frac{0.1}{x_{m,2}}}-1}} \right)\\
		&~~\mbox{s.t.} ~~~ \sum_{m=1}^M  \log(1 + x_{m,1}) = L,\\
		&~~\mbox{s.t.} ~~~ \sum_{m=1}^M  x_{m,2}^2 = 1,
	\end{align*}
	where the optimization variables are $\boldsymbol{x}_m  = [x_{m,1},x_{m,2}]^\top$ for $m\in \{1,\ldots,M\}$, 
	$M=100$, $a_m = m^{-\tau}/\sum_{m=1}^M m^{-\tau}$, $\tau=3$ and $\gamma_0=40$.
	For $E_1$, it can be verified that the optimal solution when $a_m=\frac{1}{M}$ is obtained as:
	$$
	   x_{m,1} = \exp(L/M-1), ~~x_{m,2} = \sqrt{1/M},\quad m\in\{1,\ldots,M\}.
	$$
	Therefore, we constitute a TVO problem exactly as $E_1$ 
	but with parametric functions $\{b_m(\theta)\}_1^M$ replacing $\{a_m\}_1^M$ such that:
	$$
	b_m(0) = \frac{1}{M}, \quad b_m(\tau) = a_m, \quad m\in \{1,\ldots,M\}.
	$$
	We further use the re-parametrizing functions $\boldsymbol{c}(\cdot)$
	with $c_m(\theta):=\dfrac{\dot{b}_m(\theta)}{b_m(\theta)}$.
	We then apply Algorithm \ref{Alg_TVO} with hyper-parameters $\mu=10^{-7}$, $O_{\rm th}=10^{-5}$ 
	and  $\Delta\theta=10^{-2}$ to find the optimal solution.
	
	Figures \ref{Fig1} and \ref{Fig2} illustrate the trajectory solutions $\{\boldsymbol{x}_m(\theta) \}_1^M$, 
	being obtained by Algorithm \ref{Alg_TVO}, as a function of $\theta$ for different iterations.
	For the first iteration, the trajectory solutions portray a highly nonlinear behavior.
	However, as the number of iteration increases, this non-linearity reduces.
	When the algorithm converges (3rd iteration), 
	the linear curvature of the trajectory solutions indicates that a precise solution 
	with minimal optimality distance has been obtained.
	\begin{figure}
		\begin{center}
			\includegraphics[width=90 mm]{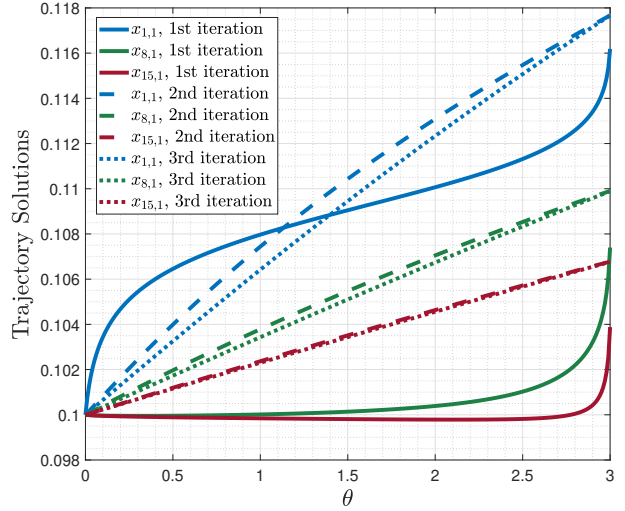}
		\end{center}
		\caption{Trajectory Solutions for $x_{m,1}$ with $m\in \{1,8,15\}$. \label{Fig1}} 
	\end{figure}
	\begin{figure}
		\begin{center}
			\includegraphics[width=90 mm]{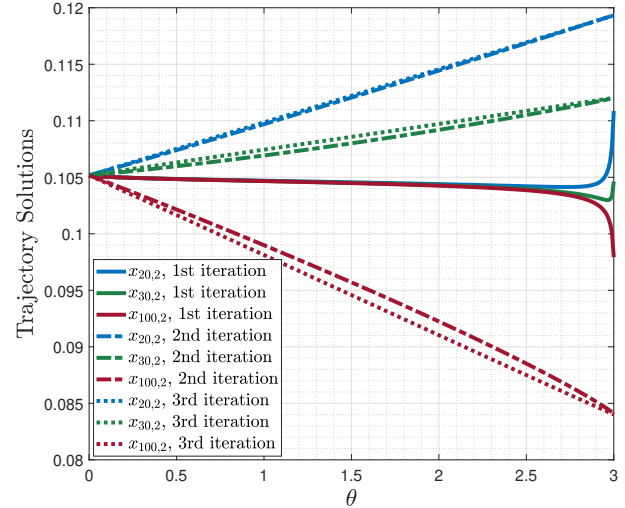}
		\end{center}
		\caption{Trajectory Solutions for $x_{m,2}$ with $m\in \{20,30,100\}$. \label{Fig2}} 
	\end{figure}
	
	We also apply a PCM on Problem $E_1$ to obtain the solution.
	To have a fair comparison, we adjust the hyper-parameter $\Delta\theta$ for PCM 
	such that the corresponding optimal value is almost equal to that of Algorithm \ref{Alg_TVO}.
	As such, we need to decrease it by $1\times10^{-4}$.

	\begin{table*}[t]
	\vspace{10 pt}
	\renewcommand{\arraystretch}{1.6}
	\caption{Performance Result of OP-TVO and PCM.}
	\label{Tab_methods}
	\begin{center}
		\begin{tabular}{c|c c c c c}
			\hline 
			Approach & 
			\begin{tabular}[c]{@{}c@{}} Optimal Value     		\end{tabular} & 
			\begin{tabular}[c]{@{}c@{}} Constraint Violations   \end{tabular} & 
			\begin{tabular}[c]{@{}c@{}} Elapsed  Time  [s] 		\end{tabular} &
			\begin{tabular}[c]{@{}c@{}} $O_d$       			\end{tabular} & 
			\begin{tabular}[c]{@{}c@{}} $\hat{O}_d$          	\end{tabular} \\ [.1ex] \hline
			${\tt Benchmark}$       		 & $5.6089\times10^{-7}$    &1.312$\times 10^{-6}$      & 2066 		&  0.0             & N/A                 \\ \hline
			${\tt \mbox{PCM}}$               & $5.6086\times10^{-7}$    &1.739$\times 10^{-4}$ 		& 271  		& 0.00135  		      	 	& N/A  			     \\ \hline
			${\tt \mbox{OP-TVO},}  $~iter=1  & $3.2443\times10^{-5}$    &0.419  					& 3    		& 0.8291   	 		  	 	& N/A                 \\ 
			${\tt ~~~~~~~~~~~}$~iter=2       & $5.6088\times10^{-7}$    &5.129$\times 10^{-5}$		&52   		& 5.132$\times 10^{-4}$   	& 0.228                \\  
			${\tt ~~~~~~~~~~~}$~iter=3 		 & $5.6088\times10^{-7}$    &5.051$\times 10^{-5}$		&105  		& 5.130$\times 10^{-4}$   	& 2.68$\times 10^{-6}$ \\ 
			${\tt ~~~~~~~~~~~}$~iter=4 		 & $5.6088\times10^{-7}$    &5.050$\times 10^{-5}$		&160  		& 5.130$\times 10^{-4}$   	& 2.14$\times 10^{-6}$ \\ \hline
		\end{tabular}
	\end{center}
	\end{table*}	
	Table \ref{Tab_methods} compares the performance results of PCM, Benchmark and OP-TVO.
	The second column shows the optimal values that these approaches achieve,
	The third column states the extent to which these methods violate the constraints, represented as the summation of constraint violations,
	the fourth column indicates the elapsed time, in seconds, 
	and the fifth and sixth columns show the optimality distance $O_d$ and its estimation $\hat{O}_d$, respectively.
	
	Although PCM achieves a slightly lower objective function value, 
	it is important to highlight that Benchmark notably excels in terms of constraint satisfaction compared to PCM.
	As such, the Benchmark solution is regarded as the reference approach.
	According to the values of $\hat{O}_d$, OP-TVO converges after three iterations.
	It can be seen that OP-TVO with ${\rm iter}=3$ outperforms  PCM
	from the computational complexity perspective 
	as it converges within 105 seconds while PCM converges after 271 seconds.
	Furthermore, the optimality distance $O_d$ of OP-TVO with ${\rm iter}=3$ is superior 
	to the optimality distance of PCM.
	Not to mention that OP-TVO better satisfies the constraints than PCM.
	These imply that OP-TVO gives a more accurate solution compared to PCM,
	and depict that OP-TVO can find the optimal solution with lower computational complexity
	and better performance precision.

	\section{Conclusion}\label{SecConclu}
	In this paper, we reformulated a class of nonlinear constrained optimization problems 
	based on a time-varying optimization with some parametric functions. 
	We then leveraged a re-parametrization trick to find a dynamical system 
	being linearly expressed in terms of the parametric functions.  
	To minimize the optimality distance of the solution, being traced by a dynamical system, 
	we utilized a functional minimization problem. 
	As such, an iterative algorithm, called OP-TVO, was devised to find the solution trajectory with optimal optimality distance. 
	Based on the results, the proposed algorithm outperforms the Prediction-Correction Method (PCM)
	from the optimality distance and convergence rate perspectives.
	The results show that  OP-TVO can be considered as a promising approach to replace PCM for distributed time-varying optimization problems.
	Optimization problems with time-varying constraints can be considered as a future work.

	\section{Acknowledgement}
	This work was partially funded by the Research Council of Finland under grant number 357533.
	
	\bibliographystyle{IEEEtran}
	\bibliography{IEEEabrv,IEEE}
	
\appendix
\subsection{Proof of Proposition 1.}\label{App1}
We constitute the Lagrangian function, and obtain the Karush–Kuhn–Tucker conditions which read:
\begin{align}\label{EQ_APP1}
\begin{cases}
	b_m(\theta) \nabla_m f + \boldsymbol{\mathcal{J}}_m \boldsymbol{\lambda} = \boldsymbol{0}, \vspace{5 pt}\\
	\sum_{m=1}^M \boldsymbol{h_m}(\boldsymbol{x}_m) - \boldsymbol{u}  = \boldsymbol{0},  
\end{cases}
\end{align}
where $\boldsymbol{\lambda}$ is the Lagrange multiplier.
By considering \textbf{Assumption 1} and taking derivative of the recent conditions w.r.t. $\theta$, we get:
\begin{align}\label{EQ_APP2}
\!\!\!\!\!\!
\begin{cases}
	\dot{b}_m(\theta) \nabla_m f +  \Big( b_m(\theta) \nabla_m^2 f + \bsum_{j=1}^N \lambda_j \nabla_m^2 h_{m,j} \Big) \dot{\boldsymbol{x}}_n(\theta) \vspace{-3 pt}\\ 
	\qquad \qquad \quad + \boldsymbol{\mathcal{J}}_m \dot{\boldsymbol{\lambda} }  = \boldsymbol{0}, \vspace{7 pt}\\
	\qquad \sum_{m=1}^M \boldsymbol{\mathcal{J}}_m^\top \dot{\boldsymbol{x}}_m  = \boldsymbol{0} ,
\end{cases}
\end{align}
By combining \eqref{EQ_APP1} and \eqref{EQ_APP2} and considering \textbf{Assumption 2}, the statement follows.
\end{document}